\newtheorem{thm}{Theorem}[section]
\newtheorem{cor}[thm]{Corollary}
\newtheorem{lem}[thm]{Lemma}
\newtheorem{prop}[thm]{Proposition}
\theoremstyle{definition}
\newtheorem{defn}[thm]{Definition}
\newtheorem{rem}[thm]{Remark}
\newcommand{\ep}{\varepsilon}                                                                                       
\newcommand{\map}[3]{\ensuremath{{#1}:{#2}\to{#3}}}                                     
\newcommand{\n}[1]{\ensuremath{\|{#1}\|}}                                               
\newcommand{\p}{\partial}                                                                                               
\newcommand{\set}[2]{\ensuremath{\left\{{#1}\;:\;\,{#2}\right\}}}               
\newcommand{\su}{\subset}                                                                                               
\newcommand{\st}{($*$)}                                                                                                 
\newcommand{\dtri}{{\displaystyle |\kern-.9pt|\kern-.9pt|}}
\newcommand{\tri}{|\kern-.9pt|\kern-.9pt|}
\newcommand{\tn}[1]{\ensuremath{\tri{#1}\tri}}                                              
\newcommand{\tndot}{\ensuremath{\tri\cdot\tri}}                                             
\DeclareMathOperator{\cl}{cl}                                                                               
\DeclareMathOperator{\co}{co}                                                                               
\DeclareMathOperator{\ext}{ext}                                                                         
\DeclareMathOperator{\lsp}{span}                                                                        
\begin{document}
\title{Boundaries and polyhedral Banach spaces}

\begin{abstract}
We show that if $X$ and $Y$ are Banach spaces, where
$Y$ is separable and polyhedral, and if $\map{T}{X}{Y}$ is a
bounded linear operator such that $T^*(Y^*)$ contains a boundary $B$ of $X$, then $X$ is
separable and isomorphic to a polyhedral space. Some corollaries of this result are
presented.
\end{abstract}

\author{V.\ P.\ Fonf, R.\ J.\ Smith and S.\ Troyanski}

\address{Department of Mathematics, Ben Gurion University of the Negev, Beer-Sheva, Israel}\email{fonf@math.bgu.ac.il}
\address{School of Mathematical and Statistical Sciences, University College Dublin, Belfield, Dublin 4, Ireland} \email{richard.smith@maths.ucd.ie}
\address{Departamento de Matem\'aticas, Universidad de
Murcia, Campus de Espinardo. 30100 Espinardo (Murcia), Spain.} \email{stroya@um.es}

\keywords{Polyhedral norms, renormings, boundaries, polytopes.}
\thanks{The first named author is supported by Israel Science
Foundation, Grant 209/09. The second and third named authors are supported
financially by Science Foundation Ireland under Grant Number `SFI
11/RFP.1/MTH/3112'. The third named author is also supported by FEDER-MCI
MTM2011-22457 and the project of the Institute of Mathematics and Informatics,
Bulgarian Academy of Science.}
\subjclass[2010]{46B20}
\date{\today}

\maketitle

\section{The main result}

Let $X$ be a Banach space. A subset $B\su S_{X^*}$ of the unit
sphere $S_{X^*}$ of $X$ is called a {\em boundary} of $X$ if, for any
$x\in X$, there is $f\in B$ satisfying $f(x)=\n{x}$. From the
Krein-Milman Theorem, it follows that the set $\ext B_{X^*}$ of
all extreme points of the unit ball $B_{X^*}$ of $X^*$ is a
boundary. Easy examples show that a boundary may be a proper
subset of $\ext B_{X^*}$. A separation theorem shows that, for
any boundary $B$, we have $w^*$-$\cl \co B=B_{X^*}$. It is
clear that if $X$ is infinite-dimensional, then any boundary of $X$ must be
infinite. If $X$ has a countably infinite boundary then it is separable
and isomorphically polyhedral (see \cite{F}).

The following theorem is the main result of this paper.

\begin{thm}\label{t3}
Let $X$ and $Y$ be Banach spaces, where $Y$ is separable and polyhedral.
Assume that $\map{T}{X}{Y}$ is a bounded linear operator, such
that $T^*(Y^*)$ contains a boundary $B$ of $X$. Then $X$ is
separable and isomorphic to a polyhedral space.
\end{thm}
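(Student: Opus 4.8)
The plan is to reduce the whole problem to the construction of a \emph{countable} boundary of $X$; once such a boundary is available, the result quoted above (see \cite{F}) instantly gives that $X$ is separable and isomorphically polyhedral. First I would make two harmless reductions. Replacing $Y$ by $\cl(T(X))$ --- which is again separable and polyhedral, being a subspace of $Y$, and which leaves $T^*(Y^*)$ unchanged by the Hahn--Banach theorem --- I may assume that $T$ has dense range, so that $T^*$ is injective. Moreover $T$ itself is injective: writing each $f\in B$ as $f=T^*h_f$ we have $f(x)=h_f(Tx)$, so $Tx=0$ forces $f(x)=0$ for every $f\in B$ and hence $\n{x}=\sup_{f\in B}f(x)=0$. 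Thus $T$ is an injection with dense range.

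Next I would record a separability fact: it is known that a separable polyhedral space embeds isomorphically into $c_0$, so $Y^*$ is norm separable. Consequently $T^*(Y^*)$ is a norm separable subspace of $X^*$, and the boundary $B\su T^*(Y^*)$ is norm separable as well. This already yields a countable \emph{norming} set for $X$. The trouble is that a countable norming set need not be a boundary and need not detect polyhedrality: passing to a dense subset of the sphere of $T^*(Y^*)$ destroys any discreteness, exactly as a countable dense subset of $S_{\ell_2^*}$ norms $\ell_2$ without witnessing any polyhedral structure. So the naive dense choice is useless, and the real task is to produce a \emph{sparse} boundary.

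Here I would bring in the polyhedrality of $Y$ in the form of a countable boundary $\{g_n\}\su S_{Y^*}$ satisfying $\limsup_n g_n(y)<\n{y}$ for every $y\neq 0$; such a boundary exists precisely because $Y$ is separable and polyhedral, and it may be taken $w^*$-null. The aim is to select a countable family $M\su T^*(Y^*)\cap S_{X^*}$ which norms $X$ and which inherits the analogous property $\limsup_{f\in M}f(x)<\n{x}$ for all $x\neq 0$; any such $M$ is automatically a boundary (a sup strictly exceeding the limsup is attained), and then \cite{F} finishes the proof. To build $M$ I would transport the finiteness encoded in $\{g_n\}$ through $T^*$: for $x\in S_X$ put $y=Tx$, and observe that the elements of $B$ are exactly the functionals $f=T^*h\in S_{X^*}$ for which $h$ supports the bounded convex body $T(B_X)$, so that $\n{x}=\sup\{h(Tx):T^*h\in B\}$. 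The property of $\{g_n\}$ displayed above should then force that, for each fixed $x$, only finitely many such supporting functionals come within a fixed distance of $\n{x}$, giving the gap below the maximum that encodes polyhedrality.

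The step I expect to be the main obstacle is exactly this transport. Since $T$ need not be bounded below, the pull-back seminorm $x\mapsto\n{Tx}_Y$ is strictly coarser than $\n{x}$, and the metric geometry of $B$ inside $X^*$ is badly distorted relative to that of $\{g_n\}$ inside $Y^*$. In particular the boundary of $X$ is \emph{not} the $T^*$-image of the boundary of $Y$: already for the inclusion-type map $T\colon\ell_1^2\to\ell_\infty^2$ the boundary of $\ell_1^2$ consists of the four diagonal vertices, whereas the natural boundary of $\ell_\infty^2$ consists of the four axis functionals. Hence a direct push-forward of $\{g_n\}$ cannot succeed, and the technical core must be a covering/finiteness argument showing that, despite the distortion, the discreteness of $\{g_n\}$ survives normalisation through $T^*$ on each fixed vector. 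Once the sparse countable boundary $M$ has been produced, the quoted theorem of Fonf applies and simultaneously delivers the separability of $X$ and its isomorphic polyhedrality.
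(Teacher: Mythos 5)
Your opening reductions (passing to $\cl T(X)$, injectivity of $T$ and $T^*$, separability of $Y^*$) agree with the paper, but the core of your plan is not just unexecuted --- as formulated it cannot be executed. You aim to select a countable $M\su T^*(Y^*)\cap S_{X^*}$ that norms $X$ and satisfies $\limsup_{f\in M}f(x)<\n{x}$ for all $x\neq 0$; as you observe, such an $M$ would be a countable boundary of the \emph{original} norm of $X$. But the hypotheses of Theorem \ref{t3} are compatible with the original norm having no countable boundary at all. Take $X=c_0$ equipped with an equivalent strictly convex norm, $Y=c_0$ with its canonical norm, and $T$ the formal identity. Then $T^*$ maps $\ell_1=Y^*$ \emph{onto} $X^*$, so $T^*(Y^*)=X^*$ trivially contains a boundary of $X$ and all hypotheses hold; yet strict convexity forces every face of $B_X$ to be a singleton, so a countable boundary would make $S_X$ countable --- impossible. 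The conclusion of the theorem is only \emph{isomorphic} polyhedrality, and this example shows a renorming of $X$ is unavoidable; your route, which keeps the original norm throughout and hands the countable boundary to Fonf's theorem, is therefore structurally doomed, not merely missing a ``covering/finiteness argument.'' (A secondary issue: even for $Y$ itself, a separable polyhedral norm need not admit a countable boundary with the strict gap $\limsup_n g_n(y)<\n{y}$; that property is strictly stronger than polyhedrality, so your starting point for the transport is already in doubt isometrically.)

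The paper's proof is organized around exactly the renorming you omit. Separability of $X$ is obtained not from norm-separability of $B$ but from the Fonf--Lindenstrauss (I)-property \cite{FL}: with $B_i=T^*(iB_{Y^*})\cap B$, Proposition \ref{tb} shows $T^*(Y^*)$ is norm dense in $X^*$, whence $X^*$ and $X$ are separable. Then the bodies $W_n=T^{*-1}(nT^*(B_{Y^*})\cap B_{X^*})$ are pulled back into $Y^*$, where the polyhedrality of $Y$ can act: by the Deville--Fonf--H\'ajek approximation theorem \cite{DFH} each $W_n$ is approximated by a body $A_n$ with countable boundary $\{\pm h_i^n\}$ none of whose $w^*$-limit points supports $A_n$. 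The \emph{new} dual ball $U^*=w^*$-$\cl\co\bigcup_{n=1}^\infty(1+\ep_n)T^*(A_n)$ defines an equivalent norm, and property {\st} for the countable set $\{\pm(1+\ep_n)T^*h_i^n\}$ is checked in two cases: limit points within a fixed $n$ are excluded by the {\st}-type property of $A_n$, while limit points with $n_k\to\infty$ are excluded using the boundary hypothesis --- any $g\in B$ with $g(x_0)=\n{x_0}$ lies in some $T^*(A_m)$, so $(1+\ep_m)g\in U^*$ strictly beats a putative support functional of $U^*$ at $x_0$. This factor-of-$(1+\ep_n)$ inflation, which creates the gap you were hoping to transport, is the mechanism your sketch lacks and cannot be recovered without changing the norm of $X$.
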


The proof of Theorem \ref{t3} uses the following result.

\begin{prop}\label{tb}
Let $\map{T}{X}{Y}$ be a linear bounded operator, such that
$T^*(Y^*)$ contains a boundary $B$ of $X$. Then $T^*(Y^*)$ is norm
dense in $X^*$.
\end{prop}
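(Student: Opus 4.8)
The plan is to argue by contradiction, exploiting the single feature that distinguishes $T^*(Y^*)$ from an arbitrary subspace containing a boundary: every element of $B$ \emph{factors through} $T$. So suppose $T^*(Y^*)$ is not norm dense in $X^*$. Being a linear subspace, its norm closure is proper, and the Hahn--Banach theorem supplies $\Phi\in X^{**}$ with $\n\Phi=1$ that annihilates $T^*(Y^*)$. Since $\langle\Phi,T^*y^*\rangle=\langle T^{**}\Phi,y^*\rangle$ for all $y^*\in Y^*$, this is equivalent to $T^{**}\Phi=0$. Fix $g\in B_{X^*}$ with $\Phi(g)>\tfrac12$. The hypothesis $B\su T^*(Y^*)$ means each $f\in B$ may be written $f=T^*y_f^*$ for some $y_f^*\in Y^*$, whence $|f(x)|\le\n{y_f^*}\,\n{Tx}$ for every $x\in X$; thus any bounded sequence with $\n{Tx_n}\to0$ is automatically pointwise null on the whole of $B$.

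The next step is to manufacture such a sequence that is nonetheless \emph{seen} by the single functional $g$. I would consider the operator $\map{S}{X}{\R\times Y}$, $Sx=(g(x),Tx)$, whose bidual satisfies $S^{**}\Phi=(\Phi(g),T^{**}\Phi)=(\Phi(g),0)$, a genuine point of $\R\times Y$. By Goldstine's theorem $\Phi$ lies in the weak$^*$ closure of $B_X$, and $S^{**}$ is weak$^*$-to-weak$^*$ continuous, so $(\Phi(g),0)$ lies in the weak$^*$ closure of $S(B_X)$ in $(\R\times Y)^{**}$. As the point already belongs to $\R\times Y$ and $S(B_X)$ is bounded and convex, the standard fact that the weak$^*$ closure of a convex set meets the space in exactly its norm closure shows $(\Phi(g),0)\in\cl S(B_X)$. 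Selecting approximants then yields a sequence $(x_n)\su B_X$ with $g(x_n)\to\Phi(g)$ and $\n{Tx_n}\to0$.

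Finally I would invoke Simons' inequality for the bounded sequence $(x_n)$ and the boundary $B$. On the one hand, the factorisation of the first paragraph gives $\limsup_n f(x_n)=0$ for every $f\in B$, so $\sup_{f\in B}\limsup_n f(x_n)=0$; on the other, $\sup_{h\in B_{X^*}}\limsup_n h(x_n)\ge\limsup_n g(x_n)=\Phi(g)>\tfrac12$. Since Simons' inequality yields $\sup_{h\in B_{X^*}}\limsup_n h(x_n)\le\sup_{f\in B}\limsup_n f(x_n)$ — this is exactly where the boundary is used, each convex combination $\sum_n\mu_n x_n$ attaining its norm at a point of $B$ — we would obtain $\tfrac12<0$, the desired contradiction.

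The genuine obstacle is that weak$^*$ density of $T^*(Y^*)$ is automatic (it contains a boundary, so $w^*\text{-}\cl\co B=B_{X^*}$), yet is strictly weaker than norm density, the gap being caused precisely by the fact that the separating $\Phi$ may be a true bidual element not represented in $X$; one cannot simply test against it. The heart of the argument is therefore the conversion of the bidual data $T^{**}\Phi=0$ and $\Phi(g)>\tfrac12$ into an honest sequence in $X$ on which $\n{Tx_n}\to0$ while $g$ stays large, which is made possible by compressing to the single pair $(g,T)$ so that a convexity-plus-Goldstine argument applies in $\R\times Y$; the gliding-hump phenomenon that then threatens to block the conclusion is defeated only by Simons' inequality. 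The points deserving care are the equivalence $\Phi|_{T^*(Y^*)}=0\Leftrightarrow T^{**}\Phi=0$ and the weak$^*$-to-norm closure passage for $S(B_X)$.
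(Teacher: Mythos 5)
Your proof is correct, but it takes a genuinely different route from the paper's. The paper deduces the proposition in two lines from the Fonf--Lindenstrauss (I)-property \cite{FL}: writing $B=\bigcup_{i=1}^\infty B_i$ with $B_i=T^*(iB_{Y^*})\cap B$ increasing, each $w^*$-$\cl\co B_i$ lies in the $w^*$-compact convex set $T^*(iB_{Y^*})\su T^*(Y^*)$, and the (I)-property of the boundary $B$ of $B_{X^*}$ says that $\bigcup_i w^*$-$\cl\co B_i$ is already norm-dense in $B_{X^*}$. You instead argue by contradiction through the bidual: Hahn--Banach gives $\Phi\in X^{**}$ with $T^{**}\Phi=0$ and $\Phi(g)>\tfrac12$ for some $g\in B_{X^*}$; the compressed operator $Sx=(g(x),Tx)$ together with Goldstine, the $w^*$-$w^*$ continuity of $S^{**}$, and Mazur (the $w^*$-closure of the convex set $S(B_X)$ meets $\R\times Y$ exactly in its norm closure) produces $(x_n)\su B_X$ with $\n{Tx_n}\to 0$ and $g(x_n)\to\Phi(g)$; and Simons' sup-limsup inequality for the boundary $B$ (applicable because countable convex combinations of the bounded sequence $(x_n)$ converge in norm, hence attain their norm on $B$) gives $\tfrac12<\sup_{h\in B_{X^*}}\limsup_n h(x_n)\leqslant\sup_{f\in B}\limsup_n f(x_n)=0$, since every $f\in B$ factors through $T$ and so vanishes along $(x_n)$. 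Every step checks out, including the two points you flag: $\Phi|_{T^*(Y^*)}=0$ is indeed equivalent to $T^{**}\Phi=0$, and the weak$^*$-to-norm closure passage is legitimate for convex sets. As for what each approach buys: the paper's argument is shorter given the cited machinery and yields slightly finer information (the density of $T^*(Y^*)$ is witnessed by an increasing sequence of $w^*$-compact convex subsets $w^*$-$\cl\co B_i$, which is the form actually exploited elsewhere in such arguments), while yours is self-contained modulo the classical Simons inequality and makes the mechanism transparent --- the only obstruction to norm density is a bidual functional killed by $T^{**}$, which the boundary forbids. It is worth noting that the (I)-property in \cite{FL} is itself proved by Simons-type techniques, so the two proofs are at bottom relatives, but as written your route is a genuinely different and valid one.
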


\begin{proof} We make use of the so-called (I)-property (see \cite{FL}).
Let $K \su X^*$ be $w^*$-compact and convex, and suppose that $B\subset K$.
We say that $B$ has the (I)-property if, whenever $B=\bigcup_{i=1}^\infty B_i$,
where $B_i\subset B_{i+1}$, then the set $\bigcup_{i=1}^\infty w^*$-$\cl\co B_i$
is norm-dense in $K$. It is proved in \cite{FL} that, for $K$ as above, any
boundary $B$ of $K$ has the (I)-property ($B \su K$ is a boundary of $K$ if,
for any $x\in X$, there is $f\in B$ such that $f(x)=\max x(K)$).

Put $K=B_{X^*}$, and $B_i=T^*(iB_{Y^*})\cap B$ for $i=1,2,\dots$. Clearly,
$\bigcup_{i=1}^\infty w^*$-$\cl\co B_i \subset T^*(Y^*)$, and the result follows.
\end{proof}

\begin{proof}[Proof of Theorem \ref{t3}] Without loss of generality, we assume that $\n{T}=1$.
Since $Y$ is a separable polyhedral space, it follows that $Y^*$ is
separable (see \cite{F}), and hence by Proposition \ref{tb}, $X^*$ and $X$ are separable
too. Next, it is easily seen that $T$ is injective. Also, without loss of
generality, we can assume that $T$ is dense embedding, i.e.\ $\cl T(X)=Y$
(if necessary, pass to the subspace $Y_1 =\cl T(X)$ and the
operator $\map{T_1}{X}{Y_1}$). In particular, we can assume that $T^*$ is injective.
Now set
\[
W_n \;=\; T^{*-1}(nT^*(B_{Y^*})\cap B_{X^*}),\qquad n=1,2,\dots.
\]
Clearly,
\[
B_{Y^*}\su W_n\su nB_{Y^*},\qquad n=1,2,\dots,
\]
(recall that $\n{T}=1$).

The $W_n$ are convex centrally symmetric $w^*$-compact
bodies in $Y^*$ (with $Y$ separable and polyhedral). Choose a sequence
$\{\ep_n\}_{n=1}^\infty$ of positive numbers tending to $0$.
By \cite[Theorem 1.1]{DFH}, each such body $W_n$ can be approximated by a convex
centrally symmetric $w^*$-compact body $A_n$ having a countable
boundary, say $\{\pm h_i^n\}_{i=1}^{\infty}$, such that
\begin{enumerate}
\item[(a)] $(1-\ep_n)A_n\su W_n \su A_n$, and
\item[(b)] no $w^*$-limit point of $\{\pm
h_i^n\}_{i=1}^{\infty}$ is a support point of $A_n$, for any
$y\in Y$.
\end{enumerate}

Define
\[
U^* = w^*\text{-}\cl\co \bigcup _{n=1}^\infty (1+\ep_n)T^*(A_n),\; t_i^n =T^* h_i^n ,\; n=1,2,\dots,\; \tilde{B}=\{\pm
(1+\ep_n)t_i^n\}_{i,n=1}^{\infty}.
\]

It is easily seen that $U^*$ is a convex centrally symmetric
$w^*$-compact set in $X^*$, and $U^* =w^*$-$\cl\co \tilde{B}$.
Moreover, $B\su U^*$ and hence $B_{X^*}\su U^*$.

It follows that $U^*$, as the unit ball, defines an equivalent dual norm on
$X^*$. We denote the corresponding norm
on $X$ by $\tndot$, and show that $(X,\tndot)$ is polyhedral.
To prove this statement, it is enough to check that $\tilde{B}$
has {\st} (see \cite{FLP}), i.e.\ no $w^*$-limit point
of $\tilde{B}$ is a support point of $U^*$, for any $x\in X$.

If $f$ is a $w^*$-limit point of $\tilde{B}$ then, by the separability of $X$,
we may find a sequence of distinct points in $\tilde{B}$ that
converges to $f$ in the $w^*$-topology. There are two possibilities. The first
possibility is that, for some
fixed $n$, we can write $f=w^*$-$\lim_k (1+\ep_n)t_{i_k}^n
\in (1+\ep_n)T^* (A_n)$, where $i_1 < i_2 < \dots$. Then $T^{*-1}f$ is a $w^*$-limit point
of the set $\{\pm (1+\ep _n)h_i^n\}_{i=1}^{\infty}$ and, by (b) above, it cannot
be a support point of $(1+\ep_n)A_n$ for any $y\in Y$. Therefore,
$f$ is not a support point of $(1+\ep_n)T^* (A_n)$, for any $x\in X$.
Since $f\in (1+\ep_n)T^* (A_n)\su U^*$ it follows that $f$ is not a support
point of $U^*$, for any $x\in X$.

If the first possibility above does not hold then we can write $f=w^*$-$\lim_k (1+\ep_{n_k})t_{i_k}^{n_k}$,
where $n_k\to\infty$ as $k\to \infty$. Assume to the contrary that $f$ is a
support point of $U^*$ with $f(x_0)=\max x_0 (U^*)\neq 0$, $x_0\in X$.
By (a), it is easily seen that $f\in B_{X^*}$. Since $B_{X^*}\su
U^*$, it follows that $f(x_0)=\max x_0 (B_{X^*})=\n{x_0}\neq 0$. As
$B$ is a boundary of $X$, there is $g\in B$ satisfying $g(x_0)=\n{x_0}$.
Next, because $B\su T^* (Y^*)$, it easily follows that $B\su \bigcup_{n=1}^\infty
T^*(A_n)$, and hence $g\in T^*(A_m)$ for some $m$. However,
$(1+\ep_m)g\in U^*$ implies $\max x_0 (U^*)\geqslant (1+\ep_m)\n{x_0},$
contradicting $\n{x_0}=f(x_0)=\max x_0 (U^*)\neq 0$. The proof is complete.
\end{proof}

\section{Consequences}

\begin{cor}\label{c1}
Let $X$ be a Banach space and $Y_i$, $i=1,2,\dots$, be a sequence of
separable isomorphically polyhedral Banach spaces. Let $B$ be a boundary of $X$
and $\map{T_i}{X}{Y_i}$ a sequence of bounded linear operators, such that
\[
B \su \bigcup_{i=1}^\infty T_i^*(Y_i^*).
\]
Then $X$ is separable and isomorphically polyhedral.
\end{cor}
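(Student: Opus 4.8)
The plan is to deduce Corollary \ref{c1} from Theorem \ref{t3} by amalgamating the countably many spaces $Y_i$ into a single separable polyhedral space and the operators $T_i$ into a single operator, so that the hypothesis $B\su\bigcup_i T_i^*(Y_i^*)$ is converted into a containment of the form $B\su T^*(Y^*)$.

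First I would form the $c_0$-sum $Y=\big(\bigoplus_{i=1}^\infty Y_i\big)_{c_0}$. Since each $Y_i$ is separable, the countable $c_0$-sum $Y$ is separable; and since each $Y_i$ is isomorphically polyhedral, $Y$ is again isomorphically polyhedral by the stability of isomorphic polyhedrality under $c_0$-sums (cf.\ \cite{F}). Renorming $Y$ with an equivalent polyhedral norm, I may assume outright that $Y$ is separable and polyhedral; this is harmless, because passing to an equivalent norm on $Y$ changes neither the underlying set $Y^*$ nor the fact that $B$ is a boundary of $X$, the latter being a property of the fixed norm of $X$ alone.

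Next, I would choose positive scalars $\lambda_i$ with $\lambda_i(1+\n{T_i})\to 0$ (for instance $\lambda_i=2^{-i}(1+\n{T_i})^{-1}$) and define $\map{T}{X}{Y}$ by $Tx=(\lambda_i T_i x)_{i=1}^\infty$. The decay condition $\lambda_i\n{T_i}\to 0$ ensures both that each $Tx$ lies in the $c_0$-sum and that $T$ is bounded, with $\n{T}\le\sup_i\lambda_i\n{T_i}<\infty$. Dualising via $Y^*=\big(\bigoplus_i Y_i^*\big)_{\ell_1}$, one computes $T^*\big((y_i^*)_i\big)=\sum_{i=1}^\infty\lambda_i T_i^* y_i^*$. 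Testing this on the element supported in the single coordinate $j$ with entry $\lambda_j^{-1}z^*$ shows that $T_j^* z^*\in T^*(Y^*)$ for every $z^*\in Y_j^*$; as $\lambda_j\neq 0$, this gives $T_j^*(Y_j^*)\su T^*(Y^*)$ for each $j$, and hence $B\su\bigcup_{i=1}^\infty T_i^*(Y_i^*)\su T^*(Y^*)$.

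With $Y$ separable and polyhedral, $T$ bounded, and $T^*(Y^*)$ containing the boundary $B$ of $X$, Theorem \ref{t3} applies directly and yields that $X$ is separable and isomorphically polyhedral, as required. The only nonroutine ingredient is the stability result that a $c_0$-sum of isomorphically polyhedral spaces is again isomorphically polyhedral; I expect this to be the main obstacle, while the remaining steps reduce to verifying that the amalgamated operator is well defined, bounded, and has an adjoint whose range captures all of $\bigcup_i T_i^*(Y_i^*)$.
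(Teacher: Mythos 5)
Your proposal is correct and takes essentially the same route as the paper: form the $c_0$-sum $Y=\left(\bigoplus_{i=1}^\infty Y_i\right)_{c_0}$, define a weighted diagonal operator $\map{T}{X}{Y}$ whose adjoint's range (computed via $Y^*=\left(\bigoplus_{i=1}^\infty Y_i^*\right)_{\ell_1}$, using the single-coordinate functionals) contains every $T_i^*(Y_i^*)$ and hence $B$, and then apply Theorem \ref{t3}. The only differences are cosmetic: the paper first normalizes $\n{T_i}=1$ and uses weights $i^{-1}$ in place of your $\lambda_i$, and it simply asserts as clear the stability fact you flag as the main obstacle, namely that a countable $c_0$-sum of separable isomorphically polyhedral spaces is separable and isomorphically polyhedral.
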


\begin{proof}
Without loss of generality we can assume that $Y_i$ is polyhedral and $\n{T_i}=1$ for all $i$. Let
$Y = \left(\bigoplus_{i=1}^\infty Y_i \right)_{c_0}$. Clearly $Y$ is a separable
isomorphically polyhedral space. Define $\map{T}{X}{Y}$ by $(Tx)_i = i^{-1}T_ix \in Y_i$.
Evidently, $Y^* = \left(\bigoplus_{i=1}^\infty Y_i^* \right)_{\ell_1}$ and if $f \in Y^*$ and $x \in X$, we have
\[
(T^*f)(x) \;=\; f(Tx) \;=\; \sum_{i=1}^\infty i^{-1}f_i(T_ix) \;=\; \sum_{i=1}^\infty (i^{-1}T_i^*f)(x),
\]
where $f=(f_i)_{i=1}^\infty$ and $\n{f} = \sum_{i=1}^\infty \n{f_i}$. In particular, if
$g \in Y_i^* \su Y^*$ we have $T^*g = i^{-1}T_i^* g$, whence
\[
B \su \bigcup_{i=1}^\infty T_i^*(Y_i^*) \su T^*(Y^*).
\]
Apply Theorem \ref{t3} to finish the proof.
\end{proof}

\begin{cor}\label{c2}
Assume that $X$ has a boundary $B$ which is contained in a set of the form
\[
\bigcup_{i=1}^\infty w^*\text{-}\cl\co K_i,
\]
where the $K_i$ are countable $w^*$-compact subsets of $X^*$. Then $X$ is isomorphically polyhedral.
\end{cor}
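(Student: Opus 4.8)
The plan is to deduce Corollary \ref{c2} from Corollary \ref{c1}. Concretely, I would manufacture, for each $i$, a separable isomorphically polyhedral space $Y_i$ together with a bounded operator $\map{T_i}{X}{Y_i}$ whose adjoint range satisfies $w^*\text{-}\cl\co K_i \su T_i^*(Y_i^*)$. Granting this, the hypothesis $B \su \bigcup_{i=1}^\infty w^*\text{-}\cl\co K_i$ immediately yields $B \su \bigcup_{i=1}^\infty T_i^*(Y_i^*)$, and Corollary \ref{c1} then delivers that $X$ is separable and isomorphically polyhedral.

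For the construction, fix $i$. Since $K_i$ is $w^*$-compact it is $w^*$-bounded, hence norm-bounded by the uniform boundedness principle; write $M_i=\sup\set{\n{f}}{f\in K_i}$. Put $W_i = w^*\text{-}\cl\co(K_i\cup(-K_i))$, a symmetric $w^*$-compact convex set, and define the seminorm $p_i(x)=\sup_{f\in K_i}|f(x)|=\max_{g\in W_i}g(x)$ on $X$; it is continuous since $p_i\leqslant M_i\n{\cdot}$. Let $Y_i$ be the completion of the normed space $(X/\ker p_i,\,p_i)$ and let $\map{T_i}{X}{Y_i}$ be the canonical map, which is bounded and has dense range.

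The key step is the duality computation. Under the natural identification of $Y_i^*$ with the $w^*$-closed subspace $(\ker p_i)^\perp$ of $X^*$, the adjoint $T_i^*$ becomes the inclusion map, and $T_i^*(B_{Y_i^*})=W_i$. This is a bipolar calculation: $p_i$ is exactly the Minkowski gauge of the polar of $W_i$ in $X$, so by the bipolar theorem the dual unit ball $B_{Y_i^*}$ coincides with $W_i$. In particular $w^*\text{-}\cl\co K_i \su W_i = T_i^*(B_{Y_i^*}) \su T_i^*(Y_i^*)$, as needed. It remains to see that $Y_i$ is separable and isomorphically polyhedral: by Milman's theorem applied to the $w^*$-compact set $K_i\cup(-K_i)$ we have $\ext W_i \su K_i\cup(-K_i)$, so $\ext B_{Y_i^*}$ is countable. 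Being a boundary of $Y_i$ by Krein--Milman, it exhibits $Y_i$ as a space with a countable boundary, whence $Y_i$ is separable and isomorphically polyhedral by the result of \cite{F} recalled above (the finite-dimensional case being trivial).

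I expect the main obstacle to be precisely this duality identification $B_{Y_i^*}=W_i$ together with the verification that $\ext W_i$ lies inside the countable set $K_i\cup(-K_i)$; both are clean in principle but require care with the polar/gauge bookkeeping and with the application of Milman's theorem (which needs $K_i\cup(-K_i)$, rather than $K_i$, to be $w^*$-compact). Once these are established, assembling the operators $T_i$ and invoking Corollary \ref{c1} is routine.
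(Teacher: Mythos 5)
Your proof is correct, but it takes a genuinely different route from the paper. The paper's proof is a one-liner: it takes $Y_i=C(K_i)$, which is separable and isomorphically polyhedral because $K_i$ is a countable compact space (the point evaluations $\{\pm\delta_t\}_{t\in K_i}$ form a countable boundary, so the result of \cite{F} quoted in the introduction applies), defines $T_i$ as the evaluation operator $(T_ix)(t)=t(x)$, observes that $w^*$-$\cl\co K_i\su T_i^*(B_{C(K_i)^*})$ since $T_i^*\delta_t=t$ and $T_i^*(B_{C(K_i)^*})$ is $w^*$-compact and convex, and then invokes Corollary \ref{c1}. You instead build the \emph{minimal} target space intrinsically: the quotient-completion of $X$ under the seminorm $p_i=\max_{W_i}(\cdot)$, with the bipolar theorem giving $T_i^*(B_{Y_i^*})=W_i$, and Milman's theorem plus Krein--Milman giving the countable boundary $\ext B_{Y_i^*}\su K_i\cup(-K_i)$. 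Both proofs funnel through Corollary \ref{c1} and ultimately rest on Fonf's countable-boundary theorem; yours avoids needing to know anything about $C(K)$ spaces, at the cost of the polar/gauge bookkeeping, which you carry out correctly (including the point that Milman requires the $w^*$-compact set $K_i\cup(-K_i)$ rather than $K_i$, and that the finite-dimensional case is trivial). One small inaccuracy worth fixing: your parenthetical identification of $Y_i^*$ with the $w^*$-closed subspace $(\ker p_i)^\perp$ is wrong in general --- $T_i^*(Y_i^*)$ is $\bigcup_{n=1}^\infty nW_i$, which can be a proper (and non-closed) subspace of $(\ker p_i)^\perp$, since vanishing on $\ker p_i$ does not imply $p_i$-continuity in infinite dimensions --- but this remark is not load-bearing: the identity you actually use, $T_i^*(B_{Y_i^*})=W_i$, is exactly what your bipolar computation establishes.
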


\begin{proof}
Let $Y_i$ be the isomorphically polyhedral space $C(K_i)$, and define $\map{T_i}{X}{Y_i}$
by $(T_i x)(t)=t(x)$, $x \in X$, $t \in K_i$. Then $w^*$-$\cl\co K_i \su T^*(B_{C(K_i)^*})$,
and we can apply Corollary \ref{c1}.
\end{proof}

We will see later in Remark \ref{r1} that we cannot drop the
requirement that the $K_i$ above are $w^*$-compact. Our final result
includes an application of the material above, stated in terms of M-bases.

\begin{defn}
Let $\{x_i\}\su X$ be an M-basis of a Banach space $X$, having
biorthogonal sequence $\{x_i^*\}\su X^*$. We call a subset $A\su
X^*$ {\em summable} if $\sum_{i=1}^\infty |f(x_i)|<\infty$ for all $f\in A$.
\end{defn}

\begin{lem}\label{l1}
Let $B=\{\pm f_i\}\su S_{X^*}$ be a countable boundary of $X$, take
a sequence of numbers $\{\ep_i\}$, $0<\ep_i <\frac{1}{2}$,
$\lim_i\ep_i=0$, and a sequence of vectors $\{t_i\}\su X^*$ satisfying
$\n{f_i -t_i}<\ep_i$. Then the sequence $\pm h_i =\pm (1+2\ep_i)t_i$,
$i=1,2,\dots$, is a boundary having {\st} with respect to norm it
generates, given by
\[
\tn{x}=\sup_i |h_i (x)|,\qquad x\in X.
\]
\end{lem}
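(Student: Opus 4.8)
The plan is to establish three things: that $\tndot$ is an equivalent norm whose dual unit ball is $U^*=w^*$-$\cl\co\{\pm h_i\}$; that $\{\pm h_i\}$ has \st; and that $\{\pm h_i\}$ is a boundary of $(X,\tndot)$, the last following formally from the second. First I would record the perturbation estimates. Since $\n{f_i-t_i}<\ep_i$ and $\n{f_i}=1$, we have $1-\ep_i<\n{t_i}<1+\ep_i$; writing $h_i-f_i=(t_i-f_i)+2\ep_i t_i$ gives $\n{h_i-f_i}\to 0$ as $i\to\infty$, while $1<\n{h_i}<3$ for all $i$ (using $\ep_i<\tfrac12$). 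In particular $\{\pm h_i\}$ is bounded, and because $B=\{\pm f_i\}$ is a boundary one finds, for each $x$, an index $j$ (and a sign) with $|f_j(x)|=\n{x}$, whence $|h_j(x)|\geq(1+2\ep_j)(1-\ep_j)\n{x}\geq\n{x}$; thus $\n{x}\leq\tn{x}\leq 3\n{x}$, so $\tndot$ is an equivalent norm with dual ball $U^*$. Recall also that $X$ is separable, since it has a countable boundary.

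Next I would prove \st. Let $f$ be a $w^*$-limit point of $\{\pm h_i\}$. By separability of $X$ we may pick distinct terms converging to $f$; after passing to a subsequence and possibly changing signs, $f=w^*$-$\lim_k h_{i_k}$ with $i_k\to\infty$, so $\ep_{i_k}\to 0$ and $\n{h_{i_k}-f_{i_k}}\to 0$. Since $\n{f_{i_k}}=1$ and the norm is $w^*$-lower semicontinuous, $\n{f}\leq 1$, i.e.\ $f\in B_{X^*}$. Now suppose, for a contradiction, that $f$ is a support point of $U^*$: there is $x_0\neq 0$ with $f(x_0)=\max x_0(U^*)=\tn{x_0}\neq 0$. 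Then $\tn{x_0}=f(x_0)\leq\n{x_0}$. Because $B$ is a boundary of $X$, choose $j$ (and a sign) with $f_j(x_0)=\n{x_0}$; then $t_j(x_0)\geq(1-\ep_j)\n{x_0}$ and hence
\[
h_j(x_0)=(1+2\ep_j)t_j(x_0)\geq(1+2\ep_j)(1-\ep_j)\n{x_0}>\n{x_0}\geq\tn{x_0},
\]
the strict inequality holding because $\ep_j<\tfrac12$. But $h_j\in U^*$ forces $h_j(x_0)\leq\max x_0(U^*)=\tn{x_0}$, a contradiction. So no $w^*$-limit point of $\{\pm h_i\}$ is a support point of $U^*$, and \st\ holds.

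Finally I would deduce the boundary property. Fix $x\neq 0$; as $U^*$ is $w^*$-compact the value $\tn{x}=\max x(U^*)$ is attained, and the face $\set{g\in U^*}{g(x)=\tn{x}}$ is nonempty, convex and $w^*$-compact, so by the Krein--Milman theorem it has an extreme point $f_0$, which is then extreme in $U^*$. By Milman's theorem $\ext U^*\su w^*$-$\cl\{\pm h_i\}$, so $f_0$ lies in $\{\pm h_i\}$ together with its $w^*$-limit points. Were $f_0$ a limit point, then since $f_0(x)=\tn{x}\neq 0$ it would be a support point of $U^*$, contradicting \st; therefore $f_0=\pm h_{i_0}$ for some $i_0$, so $|h_{i_0}(x)|=\tn{x}$ and the supremum defining $\tn{x}$ is attained. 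Hence $\{\pm h_i\}$ is a boundary of $(X,\tndot)$.

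I expect the main obstacle to be the proof of \st, and within it the precise role of the dilation factor $(1+2\ep_i)$: it must be large enough that $(1+2\ep_j)(1-\ep_j)>1$, so that the scaled point $h_j$ strictly overshoots $f$ in the direction $x_0$ and overcomes the loss incurred by the perturbation $\n{f_j-t_j}<\ep_j$. The other delicate point is verifying $f\in B_{X^*}$, which rests on $\n{h_i-f_i}\to 0$ together with the $w^*$-lower semicontinuity of the norm; once these are in place the boundary property is a formal consequence of \st\ via the Krein--Milman and Milman theorems.
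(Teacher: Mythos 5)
Your proposal is correct and takes essentially the same route as the paper: the engine in both is the strict inequality $\tn{x}>\n{x}$ for $x\neq 0$, coming from $(1+2\ep_j)(1-\ep_j)>1$, combined with the observation that every $w^*$-limit point of $\{\pm h_i\}$ lies in $B_{X^*}$ because $\ep_i\to 0$ and $\n{h_i-f_i}\to 0$, so that a limit point supporting $U^*$ would force $\tn{x_0}\leqslant\n{x_0}$, a contradiction. The only difference is that the paper leaves the boundary property implicit in the {\st}-machinery of \cite{FLP}, whereas you verify it explicitly via Krein--Milman and Milman's theorem --- a harmless and in fact clarifying addition.
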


\begin{proof}
First, note that for any $x\in X$, $x\neq 0$, we have

\begin{equation}\label{e1}
\tn{x}>\n{x}.
\end{equation}

Indeed, if $f_i (x)=\n{x}$ then
\begin{align*}
\tn{x} &\geqslant (1+2\ep_i)t_i (x)\\ &\geqslant (1+2\ep_i)f_i (x)-(1+2\ep_i)\n{t_i
-f_i}\,\n{x}\\
&\geqslant (1+2\ep_i)(1-\ep_i)\n{x} > \n{x}.
\end{align*}

Put $V^*=\set{f\in X^*}{\tn{f}\leqslant 1}$, $S_{V^*}=\p V^*$. By
using the Hahn-Banach Theorem and (\ref{e1}), we easily obtain
$B_{X^*}\su V^*$ and, again by using (\ref{e1}), we see that no
functional $f\in S_{V^*}\cap B_{X^*}$ (if any) attains
its norm with respect to $\tndot{}$. However, any $w^*$-limit point $g$ of $B_1$
satisfying $\tn{g}=1$ (if any) lies in $B_{X^*}$ (recall that
$\lim_i\ep_i =0$). The proof is complete.
\end{proof}

\begin{thm}\label{t4}
For a separable Banach space $X$, the following assertions are
equivalent.
\begin{enumerate}
\item[(a)] $X$ admits a boundary $B$ and a bounded linear operator
$\map{T}{X}{c_0}$, such that $T^*(c_0^*)\supset B$.
\item[(b)] $X$ admits a boundary $B$ and a bounded linear operator
$\map{T}{X}{Y}$ into a polyhedral space $Y$, such that
$T^*(Y^*)\supset B$.
\item[(c)] $X$ is isomorphically polyhedral.
\item[(d)] $X$ admits an equivalent norm having a boundary $B$, which is summable
with respect to a normalized M-basis $\{x_i\}$ with bounded biorthogonal sequence $\{x_i^*\}$.
\end{enumerate}
\end{thm}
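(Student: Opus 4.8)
The plan is to establish the four equivalences through the cycle $(a)\Rightarrow(b)\Rightarrow(c)\Rightarrow(d)\Rightarrow(a)$, understanding throughout that in $(a)$ and $(b)$ the boundary is taken with respect to whichever equivalent norm is under consideration (so that these existence statements, like $(d)$, quantify implicitly over equivalent norms, while $(c)$ is already an isomorphic invariant). The implication $(a)\Rightarrow(b)$ is immediate, since $c_0$ is separable and polyhedral, so $(a)$ is the special case $Y=c_0$ of $(b)$. For $(b)\Rightarrow(c)$ I would reduce to the hypotheses of Theorem \ref{t3}: as $X$ is separable, $Y_1:=\cl T(X)$ is separable, and as a closed subspace of the polyhedral space $Y$ it is again polyhedral, straight from the finite-dimensional definition. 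Corestricting $T$ to $\map{T_1}{X}{Y_1}$, one checks $T_1^*(Y_1^*)=T^*(Y^*)\supset B$ (each $g\in Y_1^*$ extends by Hahn--Banach to $Y^*$, and each $T^*h$ factors through $h|_{Y_1}$), so Theorem \ref{t3} applies and yields $(c)$.

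The substantive step is $(c)\Rightarrow(d)$. Since $X$ is separable and isomorphically polyhedral, $X^*$ is separable, so $X$ admits a \emph{shrinking}, normalized M-basis $\{x_i\}$ with bounded biorthogonal sequence $\{x_i^*\}$; being shrinking means $\lsp\{x_i^*\}$ is norm-dense in $X^*$. We may also assume $\ndot$ is polyhedral, so that $X$ has a countable boundary $\{\pm f_i\}\su S_{X^*}$. For a sequence $0<\ep_i<\tfrac12$ with $\ep_i\to0$, I would then use norm-density to pick finitely supported $t_i\in\lsp\{x_j^*\}$ with $\n{f_i-t_i}<\ep_i$. Because each $t_i$ is finitely supported, biorthogonality gives $\sum_j|t_i(x_j)|<\infty$, so the resulting system is \emph{automatically} summable. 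Lemma \ref{l1} then shows that $\pm h_i=\pm(1+2\ep_i)t_i$ is a boundary for the equivalent norm $\tn{x}=\sup_i|h_i(x)|$ it generates, and this boundary is summable with respect to $\{x_i\}$, which is exactly $(d)$.

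For $(d)\Rightarrow(a)$, suppose $B$ is a boundary (for the equivalent norm) that is summable with respect to a normalized M-basis $\{x_i\}$ with $M:=\sup_i\n{x_i^*}<\infty$. Define $\map{T}{X}{c_0}$ by $Tx=(x_i^*(x))_{i=1}^\infty$; this is bounded, and since $Tx$ has finite support whenever $x\in\lsp\{x_i\}$ and $c_0$ is closed, $T$ indeed maps into $c_0$, with $T^*e_k^*=x_k^*$. For $f\in B$, summability gives $\phi:=(f(x_i))_i\in\ell_1=c_0^*$, and $T^*\phi=\sum_i f(x_i)x_i^*$ converges in norm because $\sum_i|f(x_i)|\,\n{x_i^*}\le M\sum_i|f(x_i)|<\infty$; evaluating on each $x_j$ shows $T^*\phi=f$ on the dense set $\lsp\{x_i\}$, whence $T^*\phi=f$. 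Therefore $B\su T^*(c_0^*)$, giving $(a)$ and closing the cycle.

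I expect the main obstacle to be $(c)\Rightarrow(d)$. The decisive observation is that summability can be \emph{forced} by approximating the boundary functionals in norm by finitely supported ones, which is precisely why one needs a shrinking M-basis — equivalently, norm-density of $\lsp\{x_i^*\}$, available here thanks to the separability of $X^*$ — rather than an arbitrary M-basis; an arbitrary M-basis only gives $w^*$-density and would not permit norm-approximation. Securing simultaneously a shrinking, normalized, boundedly-biorthogonal M-basis is the delicate point. Once this is in place, the verification that Lemma \ref{l1} delivers a genuine equivalent norm with the stated summable boundary, and the computation in $(d)\Rightarrow(a)$, are routine.
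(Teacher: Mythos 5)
Your proposal is correct and follows essentially the same route as the paper: the cycle (a)$\Rightarrow$(b)$\Rightarrow$(c)$\Rightarrow$(d)$\Rightarrow$(a), with Theorem \ref{t3} for (b)$\Rightarrow$(c), the countable boundary from \cite{F1} together with a shrinking normalized M-basis and Lemma \ref{l1} (applied to finitely supported norm-approximants $t_i\in\lsp\{x_j^*\}$, forcing summability) for (c)$\Rightarrow$(d), and the coordinate operator $Tx=(x_i^*(x))_{i=1}^\infty$ with $T^*e_i=x_i^*$ for (d)$\Rightarrow$(a). The only difference is that you spell out details the paper leaves implicit --- in particular passing to $Y_1=\cl T(X)$ in (b)$\Rightarrow$(c), since (b) does not assume $Y$ separable while Theorem \ref{t3} does, and verifying that $T$ maps into $c_0$ and that $T^*\phi=f$ in (d)$\Rightarrow$(a) --- which are sound clarifications rather than a different method.
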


\begin{proof}
(a) $\Rightarrow$ (b) is trivial, while
(b) $\Rightarrow$ (c) is Theorem \ref{t3}. To prove
(c) $\Rightarrow$ (d), we can assume without loss of generality that $X$ polyhedral. By
\cite{F1}, $X$ admits a countable boundary $\{f_i\}$, and $X^*$ is
separable. It is well-known (see for instance \cite[Theorem 4.59]{FHHMVZ}), that $X$
admits a (shrinking) normalized M-basis $\{x_i\}$ with bounded biorthogonal sequence $\{x_i^*\}$.
By using Lemma \ref{l1}, we easily obtain a
sequence $B=\{h_i\}\su \lsp \{x_i^*\}$ which is a boundary of
$X$ with respect to an equivalent norm. Clearly, $B$ is summable with respect to the
M-basis $\{x_i\}$. Finally, we prove (d) $\Rightarrow$ (a). Let $B$ be
a boundary of $X$ which is summable with respect to a normalized 
M-basis $\{x_i\}$ with bounded biorthogonal sequence $\{x_i^*\}$. Define $\map{T}{X}{c_0}$ by
\[
Tx \;=\; (x_i ^* (x))_{i=1}^{\infty},\qquad x\in X.
\]
Evidently, $\n{T} = \sup_i \n{x^*_i} < \infty$. If $\{e_i\}$ is the natural basis of $\ell_1 =c_0^*$, then it is easily
seen that $T^* e_i =x_i^*$, $i=1,2,\dots$. Since $B$ is summable, it
follows that $T^*(c_0^*)\supset B$.
\end{proof}

\begin{rem}\label{r1}
We cannot replace the $x^*_i$ in Theorem \ref{t4} (d) even
with a normalized basis of $X^*$, if its biorthogonal sequence does not belong to $X$.
Indeed, if $X^*$ is isomorphic to $\ell_1$ then it admits a normalized basis, with respect to which every
element of $X^*$ is an absolutely summable combination.
However, there exist non-isomorphically polyhedral Banach spaces having duals isomorphic to $\ell_1$.
The spaces in e.g.\ \cite{BD} and \cite{AH} have duals isomorphic to $\ell_1$ but do
not contain any isomorphic copies of $c_0$, and in order for a Banach space
to be isomorphically polyhedral, it is necessarily $c_0$-saturated \cite{F}. The same examples
show that it is necessary for the $K_i$ in Corollary \ref{c2} to be $w^*$-compact.
\end{rem}


\begin{thebibliography}{99}

\bibitem[AH]{AH} S.\ Argyros and R.\ Haydon, {\em A hereditarily indecomposable
$\mathscr{L}_\infty$-space that solves the scalar-plus-compact problem.} Acta Math.\
\textbf{206} (2011), 1--54.

\bibitem[BD]{BD} J.\ Bourgain and F.\ Delbaen, {\em A class of special $\mathscr{L}_\infty$-spaces.}
Acta Math.\ \textbf{145} (1980), 155--176.

\bibitem[DFH]{DFH} R.\ Deville, V.\ Fonf and P.\ H\'ajek, {\em Analytic and
polyhedral approximation of convex bodies in separable polyhedral
Banach spaces.} Israel J.\ Math.\ \textbf{105} (1998), 139--154.

\bibitem[FHHMVZ]{FHHMVZ} M.\ Fabian, P.\ Habala, P.\ H\'ajek, V.\ Montesinos and
V.\ Zizler, Banach Space Theory: The Basis for Linear and Nonlinear Analysis,
CMS Books in Mathematics, Canadian Mathematical Society, Springer, New York 2011.

\bibitem[F]{F} V.\ P.\ Fonf, {\em Some properties of polyhedral Banach spaces.}
Funct.\ Anal.\ Appl.\ {\bf 14} (1980), 89--90 (translation from Russian).

\bibitem[F1]{F1} V.\ P.\ Fonf, {\em Weakly extremal properties of Banach spaces.}
Mat.\ Zametki. {\bf 45} (6) (1989) 83--92 (Russian). English translation
in Math.\ Notes Acad.\ Sci.\ USSR {\bf 45} (5 and 6) (1989), 488--494.

\bibitem[FL]{FL} V.P. Fonf and J. Lindenstrauss, Boundaries and generation of convex sets, Israel J. Math., 136 (2003), 157--172.

\bibitem[FLP]{FLP} V.\ Fonf, J.\ Lindenstrauss and R.\ Phelps, {\em Infinite
Dimensional Convexity}, in W.B. Johnson, J. Lindenstrauss (Eds.),
Handbook of the Geometry of Banach Spaces, vol 1, Elsevier, 2001, pp. 541--584.

\end{thebibliography}
\end{document}